\documentclass{amsart}
\usepackage{amssymb}
\usepackage[pagebackref]{hyperref}
\usepackage{color}
\newtheorem{theorem}{Theorem}[section]

\newtheorem{proposition}[theorem]{Proposition}
\theoremstyle{definition}
\newtheorem{definition}[theorem]{Definition}

\theoremstyle{remark}



\usepackage{txfonts}
\numberwithin{equation}{section}

\catcode`\ç=13
\defç{\c{c}}
\catcode`\é=13
\defé{\'e}
\catcode`\à=13
\defà{\`a}
\catcode`\è=13
\defè{\`e}
\catcode`\â=13
\defâ{\^a}
\catcode`\ù=13
\defù{\`u}
\catcode`\ê=13
\defê{\^e}
\catcode`\î=13
\defî{\^\i}
\catcode`\ô=13
\defô{\^o}


\def\1{{\rm (1)}}
\def\2{{\rm (2)}}
\def\3{{\rm (3)}}
\def\4{{\rm (4)}}
\def\5{{\rm (5)}}
\def\6{{\rm (6)}}
\def\7{{\rm (7)}}
\def\8{{\rm (8)}}

\def\c{{\rm (c)}}

\def\i{{\rm (i) }}
\def\ii{{\rm (ii) }}


\everymath{\displaystyle}
\begin{document}

\title{On Armendariz-like properties in Amalgamated algebras along ideals}

\author[Najib Mahdou]{Najib Mahdou}
\address{Department of Mathematics, Faculty of Science and Technology of Fez, Box 2202, University S. M.
Ben Abdellah Fez, Morocco}
 \email{mahdou@hotmail.com}

 \author{Abdeslam Mimouni}
\address{Department of Mathematics and Statistics, King Fahd University of Petroleum \& Minerals, P. O. Box 278, Dhahran 31261, Saudi
Arabia.} \email{ amimouni@kfupm.edu.sa }



\author[Mounir El ouarrachi]{Mounir El ouarrachi}
\address{Department of Mathematics, Faculty of Science and Technology of Fez, Box 2202, University S. M.
Ben Abdellah Fez, Morocco} \email{m.elouarrachi@gmail.com}

\subjclass[2000]{16E05, 16E10, 16E30, 16E65}



\keywords {Amalgamated algebra along an ideal, reduced ring, Armendariz ring, nil-Armendariz ring,weak Armendariz, semicommutative ring.}
\thanks{This work is supported by KFUPM under Project RG 1327-1\& RG 1327-2}

\begin{abstract} Let $f: A\rightarrow B$ be a ring homomorphism and $J$ be an ideal of $B$. In this paper, we investigate the transfer of Armendariz-like properties to the amalgamation of $A$ with $B$ along $J$ with respect to $f$ (denoted by $A\bowtie^fJ)$  introduced and studied by D'Anna, Finocchiaro and Fontana in 2009. Our aim is to provide necessary and sufficient conditions for $A\bowtie^fJ,$ to be an Armendariz ring, nil-Armendariz ring and weak Armendariz ring.\smallskip
\end{abstract}

\maketitle

\section{Introduction}  All rings considered are associative with identity elements and all modules are unital. Given a ring $R$,
$nil(R)$ denotes the nil radical of $R$, that is, the set of all nilpotent elements of $R$ and the polynomial ring over $R$ is denoted by $R[x]$. For a polynomial $f(x)\in R[x]$, the content of $f(x)$, denoted by $c(f)$, is the ideal of $R$ generated by all coefficients of $f(x)$. In \cite{rc}, Rege and Chhawchharia introduced the notion of Armendariz ring as an associative ring $R$ with identity such that for every polynomials $f(x)=\displaystyle\sum_{i=0}^{i=m}a_{i}x^{i}$ and $g(x)=\displaystyle\sum_{j=0}^{i=n}b_{j}x^{j}$ in $R[x]$, $f(x)g(x)=0$ implies that $a_{i}b_{j}=0$ for every $i, j$. The name was chosen because Armendariz had shown that a reduced ring (i.e., a ring without nonzero nilpotent elements) satisfies this property (\cite{arm}). Later, in 1998, D. D. Anderson and V. Camillo continued this investigation by studying  Armendariz rings and Gauss rings (recall that a ring $R$ is said to be a Gauss ring if for every polynomials $f(x)$ and $g(x)$ in $R[x]$, $c(fg)=c(f)c(g)$). Among others, they proved that a commutative ring $R$ is Gaussian if and only if each homomorphic image of $R$ is an Armendariz ring (\cite{and}). Since then, various generalizations of Armendariz rings such as skew Armendariz ring, weak Armendariz ring, central Armendariz ring, nil-Armendariz ring etc appeared in the literature.\\

In 2006, Liu and Zhao (\cite{lz}) introduced the notion of a weak Armendariz ring as a ring $R$ such that whenever two polynomials $f(x)=\displaystyle\sum_{i=0}^{i=m}a_{i}x^{i}$ and $g(x)=\displaystyle\sum_{j=0}^{i=n}b_{j}x^{j}$ in $R[x]$ satisfy $f(x)g(x)=0$, then $a_{i}b_{j}\in nil(R)$ for every $i, j$. Among others, they proved that a ring $R$ is a weak Armendariz ring if and only if for every positive integer $n$, the $n$-by-$n$ upper triangular matrix ring $T_{n}(R)$ is a weak Armendariz ring. Moreover, if $R$ is a semicommutative ring (i.e., a ring such that whenever $ab=0$, $aRb=0$), then the polynomial ring $R[x]$ and the ring $R[x]/(x^{n})$ are weak Armendariz rings. Here, it is worth to notice that a weaker version of Armendariz ring notion also called a``weak Armendariz ring" (or $1$-Armendariz ring) is due to Lee and Wong (\cite{lw}) in the sense that whenever two linear polynomials $f(x)=a_{0}+a_{1}x$ and $g(x)=b_{0}+b_{1}x$ satisfy $fg=0$, then $a_{i}b_{j}=0$ for every $i, j=0,1$.  \\

In 2008, observing that in all examples found in the literature of Armendariz and weak Armendariz rings, the set of nilpotent elements forms an ideal, R. Antoine proved that this is not true in general and he provided an example of Armendariz ring $R$ for which $nil(R)$ is not an ideal (\cite[Example 4.8]{ant}). However, if $nil(R)$ is an ideal of $R$, then $R$ is a weak Armendariz ring, and in fact $R$ satisfies a stronger condition. This allowed him to introduce the notion of nil-Armendariz ring as a ring $R$ such that whenever two polynomials $f(x)=\displaystyle\sum_{i=0}^{i=m}a_{i}x^{i}$ and $g(x)=\displaystyle\sum_{j=0}^{i=n}b_{j}x^{j}$ in $R[x]$ satisfy $f(x)g(x)\in nil(R)[x]$, then $a_{i}b_{j}\in nil(R)$ for every $i, j$. He proved that if $R$ is a nil-Armendariz ring, then $nil(R)$ is a subring without unit of $R$. He also studied the conditions under which the polynomial ring over a nil-Armendariz ring is a nil-Armendariz ring.\\

The following diagram of implication summarizes the relation between the above notions: reduced ring $\Longrightarrow$ Armendariz ring $\Longrightarrow$
nil-Armendariz ring $\Longrightarrow$ weak Armendariz ring. The reverses of the first and second implications are not, in general, true and examples can be found in \cite[Proposition 2.1]{Hu} and \cite[Example 4.9]{ant}. However, we do not know so far any example of weak Armendariz ring which is not a
nil-Armendariz ring. This question was left open in \cite{ant}.\\

Let $A$ and $B$ be two rings with unity, let $J$ be an ideal of
$B$ and let $f: A\rightarrow B$ be a ring homomorphism. In this
setting, we can consider the following subring of $A\times B$:
\begin{center} $A\bowtie^{f}J: =\{(a,f(a)+j)\mid a\in A,j\in
J\}$\end{center} called \emph{the amalgamation of $A$ and $B$
along $J$ with respect to $f$}. This
construction is a generalization of \emph{the amalgamated
duplication of a ring along an ideal} (introduced and studied by
D'Anna and Fontana in \cite{A, AF1, aff2}). The interest of amalgamation
resides, partly, in its ability to cover several basic constructions in commutative algebra,
including pullbacks and trivial ring extensions (also called Nagata's idealizations)(cf. \cite[page 2]{Nagata}). Moreover, other
classical constructions (such as the $A+XB[X]$, $A+XB[[X]]$, and the $D+M$ constructions) can be studied as particular cases of the amalgamation (\cite[Examples 2.5 and 2.6]{aff1}) and other classical constructions, such as the CPI extensions (in the sense of Boisen and Sheldon \cite{Boisen}) are strictly related to it (\cite[Example 2.7 and Remark 2.8]{aff1}). In \cite{aff1}, the authors studied the basic properties of this construction (e.g., characterizations for $A\bowtie^{f}J$ to be a Noetherian ring, an integral domain, a reduced ring) and they characterized those distinguished pullbacks that can be expressed as an amalgamation. Moreover, in \cite{aff2}, they pursued the investigation on the structure of the rings of the form $A\bowtie^{f}J$, with particular attention to the prime spectrum, chain properties and Krull dimension.\\

This paper aims at studying the transfer of the notions of ``Armendariz ring'', ``nil-Armendariz ring" and ``weak Armendariz ring" to the amalgamation of algebras along ideals. It contains, in addition to the Introduction, three sections and each section deals respectively with one of the pre-mentioned notions.
The main results (Theorem~\ref{Armendariz.1}, Theorem~\ref{nil-Armendariz.1} and Theorem~\ref{weak Armendariz.1}) can be summarized as follows:\\

\begin{theorem} Let $(A,B)$ be a pair of rings, $f: A \rightarrow B$ be a ring homomorphism and $J$ be a  proper ideal of $B$.
\begin{enumerate}
  \item If $A\bowtie^{f}J$ is an Armendariz (resp. a nil-Armendariz, resp. a weak Armendariz) ring, then $A$ is an Armendariz (resp. a nil-Armendariz, resp. a weak Armendariz) ring.
  \item If $A$ and $f(A)+J$ are Armendariz (resp. nil-Armendariz, resp. weak Armendariz) rings, then $A\bowtie^{f}J$ is an Armendariz (resp. a nil-Armendariz, resp. a weak Armendariz) ring.
  \item Assume that $J\cap S\neq\varnothing$ where $S$ is the set of regular central elements of $B$. Then
  $A\bowtie^{f}J$ is an Armendariz (resp. a nil-Armendariz, resp. a weak Armendariz) ring if and only if $A$ and $f(A)+J$ are Armendariz (resp. nil-Armendariz, resp. weak Armendariz) rings.
  \item Assume that $J\cap nil(B)=(0)$ (resp. $J\subseteq nil(B)$). Then
  $A\bowtie^{f}J$ is an Armendariz (resp. a nil-Armendariz, resp.  a weak Armendariz) ring  if and only if $A$ is an Armendariz (resp. a nil-Armendariz, resp. a weak Armendariz) ring.
  \item Assume that $f^{-1}(J)\cap nil(A)=(0)$ (resp. $ f^{-1}(J)\subseteq nil(A)$). If $f(A)+J$ is an Armendariz (resp. a weak Armendariz) ring, then $A\bowtie^{f}J$  is an Armendariz (resp. a weak Armendariz) ring, and the equivalence holds for nil-Armendariz property.
  \item Assume that $f$ is injective.\\
  \i $f(A)\cap J=\{0\}$. Then $A\bowtie^{f}J$ is a weak Armendariz ring if and only if $f(A)+J$ is a weak Armendariz ring.\\
  \ii $J\subseteq nil(B)$. If $f(A)+J$ is a weak Armendariz, then $A\bowtie^{f}J$ is a weak Armendariz ring.
  \item Assume that $J$ is semicommutative. If $A$ is a weak Armendariz ring, then so is $A\bowtie^{f}J$.
  \item Assume that $f^{-1}(J)$ is semicommutative. If $f(A)+J$ is a weak Armendariz ring, then so is  $A\bowtie^{f}J$.
\end{enumerate}
\end{theorem}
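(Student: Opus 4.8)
The plan is to test the weak Armendariz condition directly on $R:=A\bowtie^{f}J$ by exploiting the two canonical surjections out of $R$, namely $p_{A}\colon R\to A$, $(a,f(a)+j)\mapsto a$, with kernel $\{0\}\times J$, and $p_{B}\colon R\to f(A)+J$, $(a,f(a)+j)\mapsto f(a)+j$, with kernel $K:=f^{-1}(J)\times\{0\}$; note that $K$, as a ring, is isomorphic to the ideal $f^{-1}(J)$, hence is semicommutative by hypothesis. I would also recall the description of nilpotents in an amalgamation: an element $(a,f(a)+j)$ lies in $nil(R)$ if and only if $a\in nil(A)$ and $f(a)+j\in nil(B)$. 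So, starting from two polynomials $F=\sum_{i}\alpha_{i}x^{i}$ and $G=\sum_{k}\beta_{k}x^{k}$ in $R[x]$ with $FG=0$, where $\alpha_{i}=(a_{i},f(a_{i})+j_{i})$ and $\beta_{k}=(c_{k},f(c_{k})+l_{k})$, proving $\alpha_{i}\beta_{k}\in nil(R)$ reduces to the two separate facts $a_{i}c_{k}\in nil(A)$ and $(f(a_{i})+j_{i})(f(c_{k})+l_{k})\in nil(B)$ for all $i,k$.

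First I would dispatch the second component. Applying the ring homomorphism $p_{B}$ to the identity $FG=0$ gives $p_{B}(F)\,p_{B}(G)=0$ in $(f(A)+J)[x]$, whose coefficients are exactly the $f(a_{i})+j_{i}$ and $f(c_{k})+l_{k}$. Since $f(A)+J$ is weak Armendariz, each product $(f(a_{i})+j_{i})(f(c_{k})+l_{k})$ lies in $nil(f(A)+J)\subseteq nil(B)$, which is the second component statement. Moreover this already forces $\alpha_{i}\beta_{k}$ to be nilpotent modulo $K$: writing $u_{ik}=\alpha_{i}\beta_{k}$, its image $p_{B}(u_{ik})$ is nilpotent in $f(A)+J$, so $u_{ik}^{\,t}\in K=f^{-1}(J)\times\{0\}$ for some $t$, and since powers are computed componentwise the first component of $u_{ik}^{\,t}$ is $(a_{i}c_{k})^{t}\in f^{-1}(J)$. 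Thus the whole problem collapses onto the first component: it remains to show $a_{i}c_{k}\in nil(A)$, knowing that $\bar F\bar G=0$ in $A[x]$ (apply $p_{A}$) and that each $(a_{i}c_{k})^{t}$ already lies in the semicommutative ideal $f^{-1}(J)$.

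For this last step I would run the standard descending induction on total degree used to show that semicommutative rings are weak Armendariz, but carried out inside the ideal $f^{-1}(J)$. The top coefficient relation gives $a_{m}c_{n}=0$, and then one processes the relations $\sum_{i+k=s}a_{i}c_{k}=0$ in decreasing order of $s$, multiplying by suitable coefficients so as to place the intermediate products inside $f^{-1}(J)$ and then inserting factors via the semicommutativity of $f^{-1}(J)$ to kill the lower-degree terms, concluding at each stage that the relevant $a_{i}c_{k}$ is nilpotent. I expect the main obstacle to be precisely this adaptation: unlike the classical statement, the coefficients $a_{i},c_{k}$ themselves need not belong to $f^{-1}(J)$, so semicommutativity cannot be applied to them directly, and the argument must be arranged so that one only ever inserts elements of $f^{-1}(J)$ between products already known (from the previous paragraph) to lie in $f^{-1}(J)$ modulo nilpotents. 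Keeping the bookkeeping of these powers and insertions compatible with the degree induction is the delicate point; once it is settled, combining $a_{i}c_{k}\in nil(A)$ with the second component gives $\alpha_{i}\beta_{k}\in nil(R)$, and hence $A\bowtie^{f}J$ is weak Armendariz.
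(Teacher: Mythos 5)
Your proposal addresses only assertion (8) of the eight-part statement: everything you write assumes that $f^{-1}(J)$ is semicommutative and that $f(A)+J$ is weak Armendariz. Assertions (1)--(7) --- the descent to $A$, the ascent from $A$ and $f(A)+J$, the regular-central-element criterion, the cases $J\cap nil(B)=(0)$ and $J\subseteq nil(B)$, the cases $f^{-1}(J)\cap nil(A)=(0)$ and $f^{-1}(J)\subseteq nil(A)$, the two injective cases, and the case where $J$ itself is semicommutative --- together with every Armendariz and nil-Armendariz claim, are untouched. In the paper these occupy three separate theorems (Theorems~2.2, 3.1 and 4.1), so as a proof of the stated theorem the proposal covers a small fraction of what is asserted.

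Even restricted to assertion (8), there is a genuine gap. Your preliminary reductions are correct and coincide with the paper's: applying the projection $p_B$ and the weak Armendariz hypothesis on $f(A)+J$ gives $(f(a_i)+j_i)(f(c_k)+l_k)\in nil(f(A)+J)$; the componentwise description of $nil(A\bowtie^{f}J)$ is right; and from the nilpotency of $p_B(\alpha_i\beta_k)$ you correctly deduce $(a_ic_k)^{t}\in f^{-1}(J)$ for some $t$. But the entire substance of the paper's proof of (8) is the step you explicitly defer: showing $a_ic_k\in nil(A)$. You sketch a strategy, identify the obstacle (the coefficients $a_i,c_k$ need not lie in $f^{-1}(J)$, so its semicommutativity cannot be applied to them directly), and then write ``once it is settled'' --- that is, you do not settle it. Note that $(a_ic_k)^{t}\in f^{-1}(J)$ alone carries no nilpotency information, since $f^{-1}(J)$ may contain many non-nilpotent elements; all the work lies in the inductive bookkeeping. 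In the paper this is an \emph{increasing} induction on $i+j$ (your sketch proposes a descending one, contrary to both the paper and the classical Liu--Zhao argument it adapts): assuming $a_ib_j\in nil(A)$ for $i+j<l$, one multiplies the relation $\sum_{i+j=l}a_ib_j=0$ on the right by $(a_0b_l)^{p+1}$, where $(a_0b_l)^{p}\in f^{-1}(J)$; semicommutativity of $f^{-1}(J)$ is used, through a chain of insertions of elements of $f^{-1}(J)$ between factors already known to lie in $f^{-1}(J)$, to show that each term $(a_ib_{l-i})(a_0b_l)^{p+1}$ lies in $nil(f^{-1}(J))$, and the fact that $nil(f^{-1}(J))$ is an ideal (again a consequence of semicommutativity) then yields $(a_0b_l)^{p+2}\in nil(f^{-1}(J))$, hence $a_0b_l\in nil(A)$; the argument is then repeated with $(a_1b_{l-1})^{q+1}$, and so on across all products with $i+j=l$. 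Naming this as ``the delicate point'' is not the same as carrying it out, and without it the proof of even this one assertion is incomplete.
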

It is worth to mention that the proofs of some assertions of the above theorem are very similar, and for the convenience of the reader, we separate the three notions in three sections and we omitted the similar proofs to avoid repetitions as much as possible.
\begin{definition}
\1 A ring $R$ is called a reduced ring if it has no non-zero nilpotent elements.\\
\2 A ring $R$ is called a semicommutative ring if for every $a, b\in R$, $ab=0$ implies that $aRb=0$.\\
\3 A ring $R$ is called an Armendariz ring if whenever polynomials $f(x)= a_{0}+a_{1}x+...+a_{n}x^{n}$,
 $g(x)= b_{0}+b_{1}x+...+b_{m}x^{m}$ in $R[x]$ satisfy $f(x)g(x)=0$, then $a_{i}b_{j}=0$ for each $i, j$.\\
\4  A ring $R$ is called a nil-Armendariz ring if whenever the product of two polynomials
$f(x)=\sum_{i=0}^{i=n}a_{i}x^{i}$  and  $g(x)=\sum_{j=0}^{j=m}b_{j}x^{j}$  in $R[x]$ satisfies  $f(x)g(x)\in nil(R)[x]$, then $a_{i}b_{j}\in nil(R)$ for each $i, j$.\\
\5  A ring $R$ is called a weak Armendariz ring if whenever the product of two polynomials
$f(x)=\sum_{i=0}^{i=n}a_{i}x^{i}$ and  $g(x)=\sum_{j=0}^{j=m}b_{j}x^{j}$  in $R[x]$  satisfies  $f(x)g(x)=0$, then $a_{i}b_{j}\in nil(R)$ for each $i, j$.
\end{definition}
\section{Armendariz property in amalgamated algebra along an ideal}\label{Armendariz}

We start this section by the following proposition which characterizes when the amalgamated algebra $A\bowtie^{f} J$ is a reduced ring.\\

\begin{proposition}(\cite[Proposition 5.4]{aff1}) Let $(A,B)$ be a pair of rings, $f: A \rightarrow B$ be a ring homomorphism and $J$ be a proper ideal of $B$. The following conditions are equivalent:
\begin{enumerate}
\item $A\bowtie^{f}J$ is a reduced ring.
\item $A$ is a reduced ring and $nil(B)\cap J=(0)$
\end{enumerate}
In particular, if $A$ and $B$ are reduced, then $A\bowtie^{f}J$ is reduced; conversely, if $J$ is a radical ideal of $B$ and $A\bowtie^{f}J$ is  reduced, then $B$ (and $A$) is reduced.
\end{proposition}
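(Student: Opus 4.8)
The plan is to exploit that $A\bowtie^{f}J$ is a subring of the product $A\times B$ with componentwise multiplication, so that $(a,f(a)+j)^n=(a^n,(f(a)+j)^n)$ and hence an element is nilpotent precisely when $a$ is nilpotent in $A$ \emph{and} $f(a)+j$ is nilpotent in $B$. This single observation drives both implications.

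For $\1\Rightarrow\2$, I would test reducedness of $A\bowtie^{f}J$ against carefully chosen elements. If $a\in A$ is nilpotent then so is $f(a)$ (since $f$ is a homomorphism, $f(a)^n=f(a^n)=0$), so $(a,f(a))\in A\bowtie^{f}J$ is nilpotent and must vanish, giving $a=0$; thus $A$ is reduced. Likewise, for $j\in nil(B)\cap J$ the element $(0,j)=(0,f(0)+j)$ lies in $A\bowtie^{f}J$ and is nilpotent, so $j=0$, i.e. $nil(B)\cap J=(0)$. Conversely, for $\2\Rightarrow\1$, I would chain the hypotheses: a nilpotent $(a,f(a)+j)$ has $a$ nilpotent, so $a=0$ and $f(a)=0$ by reducedness of $A$; then $f(a)+j=j$ is a nilpotent element lying in $J$, so $j\in nil(B)\cap J=(0)$, and the element vanishes.

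The two ``in particular'' clauses then follow formally. If $A$ and $B$ are both reduced, then $nil(B)=(0)$ forces $nil(B)\cap J=(0)$, and $\2\Rightarrow\1$ applies. For the converse, assume $J$ is a radical ideal of $B$ and $A\bowtie^{f}J$ is reduced; then $\1\Rightarrow\2$ gives $A$ reduced and $nil(B)\cap J=(0)$, and any $b\in nil(B)$ satisfies $b^n=0\in J$, so radicality of $J$ yields $b\in J$, whence $b\in nil(B)\cap J=(0)$ and $B$ is reduced as well.

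I do not expect a serious obstacle here; the only points requiring care are verifying that each test element genuinely lies in $A\bowtie^{f}J$ (which is exactly why the statement isolates $nil(B)\cap J$ rather than all of $nil(B)$, and why one takes $a=0$ to access $(0,j)$) and, in the radical-ideal converse, correctly using that radicality of $J$ promotes the membership $b^n\in J$ to $b\in J$.
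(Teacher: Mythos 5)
Your proof is correct. Note that the paper itself gives no argument for this statement: it is quoted verbatim from D'Anna--Finocchiaro--Fontana \cite[Proposition 5.4]{aff1}, so there is no in-paper proof to compare against. Your componentwise-nilpotency argument (using that $A\bowtie^{f}J$ is a subring of $A\times B$, so $(a,f(a)+j)^{n}=(a^{n},(f(a)+j)^{n})$, testing against the elements $(a,f(a))$ and $(0,j)$, and using radicality of $J$ to promote $b^{n}\in J$ to $b\in J$ in the final clause) is the standard and essentially the only natural proof of this fact, and every step checks out.
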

Our next Theorem states necessary and sufficient conditions under which the amalgamated algebra $A\bowtie^{f} J$ is an Armendariz ring. We notice that statements \1 and \2 are immediate consequences of the fact that Armendariz-like conditions pass trivially to subrings and finite products. For the convenience of the reader, we give simple proofs.\\
\begin{theorem}\label{Armendariz.1}
Let $(A,B)$ be a pair of rings, $f: A \rightarrow B$ be a ring homomorphism and  $J$ be a  proper ideal of $B$.
\begin{enumerate}
\item If $A\bowtie^{f}J$ is an Armendariz ring, then so is $A$.
\item If $A$ and $f(A)+J$ are Armendariz rings, then so is $A\bowtie^{f}J$.
\item  Assume that $J\cap S\neq\varnothing$ where $S$ the set of regular central elements of $B$. Then
$A\bowtie^{f}J$ is an  Armendariz ring if and only if $f(A)+J$ and $A$ are Armendariz rings.
\item Assume that $J\cap nil(B)=(0)$. Then
$A\bowtie^{f}J$ is an Armendariz ring if and only if $A$ is an Armendariz ring.
\item Assume that $ f^{-1}(J)\cap nil(A)=(0)$.
If $f(A)+J$ is an Armendariz ring, then  $A\bowtie^{f}J$ is an Armendariz ring.
 \end{enumerate}
\end{theorem}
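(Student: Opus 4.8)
The plan is to handle all five parts through one device: view $A\bowtie^{f}J$ together with its two coordinate projections
$p\colon A\bowtie^{f}J\to A$, $(a,f(a)+j)\mapsto a$, and $q\colon A\bowtie^{f}J\to f(A)+J$, $(a,f(a)+j)\mapsto f(a)+j$, which are ring homomorphisms (the restrictions of the two projections of $A\times B$), with $\ker p=\{(0,j)\mid j\in J\}\cong J$ and $\ker q=\{(a,0)\mid a\in f^{-1}(J)\}\cong f^{-1}(J)$. For \1, the map $a\mapsto(a,f(a))$ embeds $A$ as a subring of $A\bowtie^{f}J$, and the Armendariz condition passes to subrings (a product-zero relation among coefficients lying in the subring is the same relation in the overring), so $A$ inherits it. For \2, note that $A\bowtie^{f}J$ is a subring of $A\times(f(A)+J)$ (the second coordinate of any element lies in $f(A)+J$); since the Armendariz condition passes to finite products (test each factor separately) and to subrings, $A\times(f(A)+J)$ and hence $A\bowtie^{f}J$ are Armendariz. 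This also gives the ``if'' direction of \3.

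The substantive point is the converse of \3. Assume $A\bowtie^{f}J$ is Armendariz and fix a regular central element $w\in J\cap S$; by \1 it remains to prove $f(A)+J$ Armendariz. Given $C=\sum c_i x^i$, $D=\sum d_l x^l$ in $(f(A)+J)[x]$ with $CD=0$, pick $a_i,b_l\in A$ with $c_i=f(a_i)+k_i$, $d_l=f(b_l)+l_l$ and lift to $\widehat C=\sum (a_i,c_i)x^i$, $\widehat D=\sum (b_l,d_l)x^l$ over $A\bowtie^{f}J$. Writing $P=\sum a_i x^i$, $Q=\sum b_l x^l$ we get $\widehat C\widehat D=(PQ,CD)=(PQ,0)$, so left multiplication by the constant $(0,w)\in A\bowtie^{f}J$ yields $\bigl((0,w)\widehat C\bigr)\widehat D=(0,w)(PQ,0)=0$. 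Applying the Armendariz property of $A\bowtie^{f}J$ to $(0,w)\widehat C=\sum(0,wc_i)x^i$ and $\widehat D$ gives $(0,wc_i)(b_l,d_l)=(0,wc_id_l)=0$, i.e. $wc_id_l=0$; as $w$ is a non-zero-divisor, $c_id_l=0$. Hence $f(A)+J$ is Armendariz.

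For \4 and \5 the mechanism is dual, and rests on reading the two hypotheses as nilpotency statements about the kernels: $J\cap nil(B)=(0)$ says exactly that $\ker p$ contains no nonzero nilpotent of $A\bowtie^{f}J$, and $f^{-1}(J)\cap nil(A)=(0)$ says the same for $\ker q$ (indeed $(0,j)$, resp. $(a,0)$, is nilpotent iff $j\in nil(B)$, resp. $a\in nil(A)$). Now take $FG=0$ in $(A\bowtie^{f}J)[x]$ with $\alpha_i=(a_i,c_i)$, $\beta_l=(b_l,d_l)$. In \4, applying $p$ gives $PQ=0$ in $A[x]$, so $A$ Armendariz forces $a_ib_l=0$ and thus $\alpha_i\beta_l=(0,c_id_l)\in\ker p$. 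In \5, applying $q$ gives $CD=0$ in $(f(A)+J)[x]$, so $f(A)+J$ Armendariz forces $c_id_l=0$ and thus $\alpha_i\beta_l=(a_ib_l,0)\in\ker q$ (in particular $a_ib_l\in f^{-1}(J)$). In both cases, once $\alpha_i\beta_l$ is known to lie in the relevant kernel, it suffices to show that $\alpha_i\beta_l$ is \emph{nilpotent} in $A\bowtie^{f}J$: then $\alpha_i\beta_l\in\ker p\cap nil(A\bowtie^{f}J)=(0)$ (resp. $\ker q\cap nil(A\bowtie^{f}J)=(0)$), so $\alpha_i\beta_l=0$, which is the Armendariz conclusion.

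The main obstacle is precisely this nilpotency of the coefficient products $\alpha_i\beta_l$, i.e. a weak-Armendariz behavior of $A\bowtie^{f}J$ for the given $F,G$. The first projection already supplies $PQ=0$ in both parts, so the natural attempt is the classical Armendariz ``peeling'' induction on degrees (kill the top product $a_mb_n=0$, then descend), carried out inside the reduced ideal in which all the surviving products live, using the nil hypothesis to substitute for the ambient reducedness that the classical argument exploits through the reversal $xy=0\Rightarrow yx=0$. Making the peeling run with only the \emph{ideal} reduced rather than the whole ring is the delicate step, since the reversals needed involve elements outside that ideal; the fallback is to invoke the weak-Armendariz transfer established in the later section to produce the nilpotency directly. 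Everything else reduces to the coordinate bookkeeping above and the two elementary closure properties used in \1 and \2.
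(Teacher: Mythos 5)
Your parts (1), (2) and (3) are correct and are essentially the paper's own route: (1) via the embedding $a\mapsto(a,f(a))$, (2) via closure of the Armendariz condition under subrings and finite products (the paper makes exactly this remark before the theorem), and (3) by lifting a zero product over $f(A)+J$ and multiplying by a constant from $0\times J$. Your one-sided variant in (3) — applying the Armendariz hypothesis to the pair $\bigl((0,w)\widehat C,\widehat D\bigr)$, which works since $(0,w)\widehat C\widehat D=(0,w)(PQ,0)=0$ and $w$ is regular — is a harmless simplification of the paper's version, which multiplies both factors by $(0,e)$ and cancels $e^{2}$. Your reduction of (4) and (5) is also sound: when $F(x)G(x)=0$, the coefficient products $\alpha_i\beta_l$ land in $0\times J=\ker p$ (resp. $f^{-1}(J)\times 0=\ker q$), and the hypotheses $J\cap nil(B)=(0)$, resp. $f^{-1}(J)\cap nil(A)=(0)$, say precisely that these kernels meet $nil(A\bowtie^{f}J)$ trivially, so nilpotency of the $\alpha_i\beta_l$ would finish the argument.

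The problem is that this nilpotency claim \emph{is} the entire content of (4) and (5), and you do not prove it: you call the peeling induction ``the delicate step,'' observe that the needed reversals involve elements outside the nil-free ideal, and then fall back on ``the weak-Armendariz transfer established in the later section.'' In a blind proof you cannot cite an unproved theorem from later in the same paper; and even granting it, the deferral is incomplete, because Theorem~\ref{weak Armendariz.1}(7),(8) require $J$ (resp. $f^{-1}(J)$) to be \emph{semicommutative}, and you never verify that a nil-free ideal is semicommutative (it is: if $ab=0$ with $a,b$ in the ideal, then $(ba)^{2}=b(ab)a=0$ forces $ba=0$, and then $(axb)^{2}=ax(ba)xb=0$ forces $axb=0$ — but this is exactly the kind of inside-the-ideal reversal you declare you cannot make run). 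What the paper actually does in (4) is a minimal-counterexample induction on $\deg F$ kept entirely inside $J$: writing $c_i=f(a_i)+j_i$, $d_t=f(b_t)+k_t$, if $l$ is least with $c_0d_l\neq 0$, then for $t<l$ one has $(d_tJc_0)^{2}=d_tJ(c_0d_t)Jc_0=0$, hence $d_tJc_0=0$ since these elements lie in $J$ and $J\cap nil(B)=(0)$; multiplying the degree-$l$ coefficient identity of $f_Bg_B=0$ on the right by $(c_0d_l)^{2}$ then gives $(c_0d_l)^{3}=0$, so $c_0d_l\in J\cap nil(B)=(0)$, a contradiction; one then peels off the constant term ($F=(a_0,c_0)+xF_1$, $(a_0,c_0)G=0$, so $F_1G=0$) and inducts. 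Part (5) is the mirror argument inside $f^{-1}(J)$. Since neither this argument nor any substitute for it appears in your proposal, parts (4) and (5) have a genuine gap.
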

\begin{proof}
\1 Assume that $A\bowtie^{f}J$ is Armendariz and let $f_A(x)=\sum_{i=0}^{i=n}a_{i}x^{j}$ and $g_A(x)=\sum_{j=0}^{j=m}b_{j}x^{j}$
  be two polynomials in $A[x]$ such that  $f_A(x)g_A(x)=0$. Then  for every $k\in \{0,...,n+m\};\,\,\,\sum_{i+j=k}a_{i}b_{j}=0$.
  Set $F(x)=\sum_{i=0}^{i=n}(a_{i},f(a_{i}))x^{i}$ and $G(x)=\sum_{j=0}^{j=m}(b_{j},f(b_{j}))x^{j}$. Then \\
\begin{eqnarray*}
  F(x)G(x) &=& \sum_{k=0}^{k=n+m}(\sum_{i+j=k}(a_{i}b_{j},f(a_{i}b_{j})))x^{k} \\
   &=& \sum_{k=0}^{k=n+m}(\sum_{i+j=k}a_{i}b_{j},\sum_{i+j=k}f(a_{i}b_{j}))x^{k} \\
   &=& \sum_{k=0}^{k=n+m}(\sum_{i+j=k}
   a_{i}b_{j},f(\sum_{i+j=k}a_{i}b_{j}))x^{k}.
\end{eqnarray*}
Hence $F(x)G(x)=0$ and so $(a_{i}b_{j},f(a_{i}b_{j}))=0$ since $A\bowtie^{f}J$ is Armendariz.
Thus, $a_{i}b_{j}=0$ and consequently $A$ is Armendariz.\\

\2 Assume that $A$ and $f(A)+J$ are Armendariz and let $F(x)=\sum_{i=0}^{i=n}(a_{i},f(a_{i})+j_{i})x^{i}$ and $G(x)=\sum_{j=0}^{j=m}(b_{j},f(b_{j})+k_{j})x^{j}$ be two polynomials in $(A\bowtie^{f}J)[x]$ such that $F(x)G(x)=0$.
Set $f_{B}(x)=\sum_{i=0}^{i=n}(f(a_{i})+j_{i})x^{i}$, $g_{B}(x)=\sum_{j=0}^{j=m}(f(b_{j})+k_{j})x^{j}$,
$f_A(x)=\sum_{i=0}^{i=n}a_{i}x^{j}$ and $g_A(x)=\sum_{j=0}^{j=m}b_{j}x^{j}$. Then $F(x)G(x)=0$ implies that
$f_{A}(x)g_{A}(x)=0$ and $f_{B}(x)g_{B}(x)=0$, which in turn implies that $(f(a_{i})+j_{i})(f(b_{j})+k_{j})=0$
and $a_{i}b_{j}=0$ for every $i, j$ since $f(A)+J$ and $A$ are Armendariz rings. Therefore $A\bowtie^{f}J$ is Armendariz.\\

\3 Let $S$ be the set of regular central elements of $B$. Assume that $J\cap S\neq\varnothing$ and $A\bowtie^{f}J$ is Armendariz.
Let $f_{A}(x)=\sum_{i=0}^{i=n}(f(a_{i})+j_{i})x^{i}$ and $g_{A}(x)=\sum_{j=0}^{j=m}(f(b_{j})+k_{j})x^{j}$ be two polynomials in $(f(A)+J)[x]$
such that $f_{A}(x)g_{A}(x)=0$ and let $e$ be a regular element of $J$. Set $F(x)=\sum_{i=0}^{i=n}(0,e(f(a_{i})+j_{i}))x^{i} \,\, and \,\, G(x)=\sum_{j=0}^{j=m}(0,e(f(b_{j})+k_{j}))x^{j}$. Clearly
\begin{eqnarray*}
    F(x)G(x) &=& \sum_{k=0}^{k=n+m}(\sum_{i+j=k}(0,e^{2}(f(a_{i})+j_{i})(f(b_{j})+k_{j}))x^{k} \\
     &=& \sum_{k=0}^{k=n+m}(0,e^{2}\sum_{i+j=k}(f(a_{i})+j_{i})(f(b_{j})+k_{j}))x^{k}=0.
  \end{eqnarray*}
So $(0,e(f(a_{i})+j_{i}))(0,e(f(b_{j})+k_{j}))=0$ since $A\bowtie^{f}J$ is Armendariz; which implies that
$e^{2}(f(a_{i})+j_{i})(f(b_{j})+k_{j})=0$ for every $i, j$. Hence $(f(a_{i})+j_{i})(f(b_{j})+k_{j})=0$,
 and this shows that $f(A)+J$ is Armendariz.\\

\4 Assume that $J\cap nil(B)=(0)$ and $A$ is Armendariz. Let $F(x)=\sum_{i=0}^{i=n}(a_{i},f(a_{i})+j_{i})x^{i}$ and $G(x)=\sum_{t=0}^{t=m}(b_{t},f(b_{t})+k_{t})x^{t}$ be two polynomials in $(A\bowtie^{f}J)[x]$ such that $F(x)G(x)=0$.
Set $f_{B}(x)=\sum_{i=0}^{i=n}(f(a_{i})+j_{i})x^{i}$, $g_{B}(x)=\sum_{t=0}^{t=m}(f(b_{t})+k_{t})x^{t}$,
$f_A(x)=\sum_{i=0}^{i=n}a_{i}x^{j}$ and $g_A(x)=\sum_{t=0}^{t=m}b_{t}x^{t}$. Then $F(x)G(x)=0$ implies that $f_{A}g_{A}=0$ ( and $f_{B}g_{B}=0$)
which in turn implies that $a_{i}b_{t}=0$ since $A$ is an Armendariz ring. Thus $(f(a_{i})+j_{i})(f(b_{t})+k_{t})\in J$ for every $i,t$. Next, we show that $(f(a_{i})+j_{i})(f(b_{t})+k_{t})=0$ for every $i,t$. For this, we proceed by induction on the degree $n$ of $F(x)$. If $n=0$, it is clear. Suppose that $n\geq 1$ and the induction hypothesis.\\
{\it Claim}: $(f(a_{0})+j_{0})(f(b_{t})+k_{t})=0$ for every $0\leq t \leq m$. Indeed, suppose that $\exists  t\in\{0,..., m\}$ such that $(f(a_{0})+j_{0})(f(b_{t})+k_{t})\neq 0$ and let $l$ be the smallest integer in $\{0,..., m\}$ such that $(f(a_{0})+j_{0})(f(b_{l})+k_{l})\neq 0$. Then for $ t\in \{0,..., l-1\}$, $(f(a_{0})+j_{0})(f(b_{t})+k_{t})=0$ and so $((f(b_{t})+k_{t})J(f(a_{0})+j_{0}))^{2} =0$. Thus
$(f(b_{t})+k_{t})J(f(a_{0})+j_{0})=0$ since $J\cap nil(B)=(0)$.
Hence $(f(a_{l-t})+j_{l-t})(f(b_{t})+k_{t})((f(a_{0})+j_{0})(f(b_{l})+k_{l}))^{2}=$
$(f(a_{l-t})+j_{l-t})(f(b_{t})+k_{t})(f(a_{0})+j_{0})(f(b_{l})+k_{l})(f(a_{0})+j_{0})(f(b_{l})+k_{l})$
$\in (f(a_{l-t})+j_{l-t})((f(b_{t})+k_{t})J(f(a_{0})+j_{0}))(f(b_{l})+k_{l})=0$. But since
the coefficient of the term $x^{l}$ in $f_{B}g_{B}=0$ is zero, we obtain\\
$0=(f(a_{0})+j_{0})(f(b_{l})+k_{l})+(f(a_{1})+j_{1})(f(b_{l-1})+k_{l-1})+...+(f(a_{l})+j_{l})(f(b_{0})+k_{0})
= (f(a_{0})+j_{0})(f(b_{l})+k_{l})+\sum_{t=1}^{t=l-1}(f(a_{l-t})+j_{l-t})(f(b_{t})+k_{t})$.
Multiplying $((f(a_{0})+j_{0})(f(b_{l})+k_{l}))^{2}$ to the preceding equation on the right side we obtain:
$((f(a_{0})+j_{0})(f(b_{l})+k_{l}))^{3}+\sum_{t=1}^{t=l-1}(f(a_{l-t})+j_{l-t})(f(b_{t})+k_{t})((f(a_{0})+j_{0})(f(b_{l})+k_{l}))^{2}=0$.
Hence $((f(a_{0})+j_{0})(f(b_{l})+k_{l}))^{3}=0$ and so $(f(a_{0})+j_{0})(f(b_{l})+k_{l})\in J\cap nil(B)=0$. Thus $(f(a_{0})+j_{0})(f(b_{l})+k_{l})=0$
which is a contradiction. Consequently,  $(f(a_{0})+j_{0})(f(b_{t})+k_{t})=0 $ for every $t\in \{0,..., m\}$. Now, set
$ F_{1}(x)=(f(a_{1})+j_{1})+(f(a_{2})+j_{2})x+...+(f(a_{n})+j_{n})x^{n-1}$. Then $F(x)=(a_{0}, f(a_{0})+j_{0})+xF_{1}(x)$ and by the claim, $(a_{0}, f(a_{0})+j_{0})G(x)=0$. Thus $F_{1}(x)G(x)=0$ and by the induction hypothesis, $(f(a_{i})+j_{i})(f(b_{t})+k_{t})=0 $ for every $1\leq i\leq n$ and $0\leq t\leq m$. Therefore  $(f(a_{i})+j_{i})(f(b_{t})+k_{t})=0 $ for every $0\leq i\leq n$ and $0\leq t\leq m$ and hence $(a_{i},f(a_{i})+j_{i})(b_{t},f(b_{t})+k_{t})=0$ for every $0\leq i\leq n$ and $0\leq t\leq m$. It follows that $A\bowtie^{f}J$ is Armendariz.\\

\5 Assume that $f^{-1}(J)\cap nil(A)=(0)$ and $f(A)+J$ is an Armendariz ring. Our argument is similar to the one in \4. Let $F(x)=\sum_{i=0}^{i=n}(a_{i},f(a_{i})+j_{i})x^{i}$ and $G(x)=\sum_{j=0}^{j=m}(b_{j},f(b_{j})+k_{j})x^{j}$ be two polynomials in $(A\bowtie^{f}J)[x]$ such that $F(x)G(x)=0$. Set $f_{B}(x)=\sum_{i=0}^{i=n}(f(a_{i})+j_{i})x^{i}$, $g_{B}(x)=\sum_{j=0}^{j=m}(f(b_{j})+k_{j})x^{j}$, $f_A(x)=\sum_{i=0}^{i=n}a_{i}x^{j}$ and $g_A(x)=\sum_{j=0}^{j=m}b_{j}x^{j}$. Since $F(x)G(x)=0$, $f_{A}g_{A}=0$ and $f_{B}g_{B}=0$. Thus $(f(a_{i})+j_{i})(f(b_{j})+k_{j})=0$ for every $i, j$ since  $f(A)+J$ is an Armendariz ring; and hence $a_{i}b_{j}\in f^{-1}(J)$. To show that $a_{i}b_{j}=0$ for every $i, j$,
we proceed by induction on the degree $n$ of $F(x)$. If $n=0$, this is trivial. Suppose that $n\geq 1$ and the induction hypothesis. First we show that $a_{0}b_{j}=0$ for every $0\leq j\leq m$. Indeed, suppose that $ \exists j\in \{0;...;m\}$ such that $a_{0}b_{j}\neq0$. Let $k$ be the smallest positive integer in $\{0,..., m\}$ such that $a_{0}b_{k}\neq0$. Then for $j\in \{0,..., k-1\}$, $a_{0}b_{j}=0$ and so $(b_{j}f^{-1}(J)a_{0})^{2}=0$. Then $b_{j}f^{-1}(J)a_{0}\subseteq f^{-1}(J)\cap nil(A)=(0)$ and so $b_{j}f^{-1}(J)a_{0}=0$. Hence $(a_{k-j}b_{j})(a_{0}b_{k})^{2}= a_{k-j}b_{j}a_{0}b_{k}a_{0}b_{k} \in  a_{k-j}(b_{j}f^{-1}(J)a_{0})b_{k}=0$. The coefficient of the term $x^{k}$ in $f_{A}(x)g_{A}(x)=0$ is
$0=a_{0}b_{k}+a_{1}b_{k-1}+...+a_{k}b_{0}= a_{0}b_{k}+\sum_{j=1}^{j=k-1}a_{k-j}b_{j}$. Multiplying $( a_{0}b_{k})^{2}$ to the preceding equation on the right side, we obtain $(a_{0}b_{k})^{3}+\sum_{j=1}^{j=k-1}(a_{k-j}b_{j})(a_{0}b_{k})^{2}=0$.
Hence $(a_{0}b_{k})^{3}=0$ and so $(a_{0}b_{k})\in f^{-1}(J)\cap nil(A)=(0)$, which is a contradiction. Consequently $a_{0}b_{j}=0$, for every $j\in \{0,...,m\}$. Finally, as in \4, set $ F_{1}(x)=(f(a_{1})+j_{1})+(f(a_{2})+j_{2})x+...+(f(a_{n})+j_{n})x^{n-1}$. Then $F(x)=(a_{0}, f(a_{0})+j_{0})+xF_{1}(x)$ and by the claim, $(a_{0}, f(a_{0})+j_{0})G(x)=0$. Thus $F_{1}(x)G(x)=0$ and by the induction hypothesis $a_{i}b_{j}=0$ for every $1\leq i\leq n$ and $0\leq j\leq m$. Therefore $a_{i}b_{j}=0$ for every $i, j$ and hence $(a_{i},f(a_{i})+j_{i})(b_{j},f(b_{j})+k_{j})=0$ for every $i, j$. It follows that $A\bowtie^{f}J$ is an Armendariz ring.
\end{proof}
\section{Nil-Armendariz property in amalgamated algebra along an ideal}

\begin{theorem}\label{nil-Armendariz.1}
Let $(A,B)$ be a pair of rings, $f : A \rightarrow B$ be a ring homomorphism and  $J$ be a  proper ideal of $B$, then
\begin{enumerate}
  \item If $A\bowtie^{f}J$ is  a nil-Armendariz ring, then so is $A$.
  \item If $A$ and $f(A)+J$ are nil-armendariz rings, then so is $A\bowtie^{f}J$.
  \item Assume that $J\cap S\neq\varnothing$ where $S$ is the set of regular central element of $B$. Then
    $A\bowtie^{f}J$ is a nil-Armendariz ring if and only if $f(A)+J$ and $A$ are nil-Armendariz rings.
  \item Assume that $J\subseteq nil(B)$. Then $A\bowtie^{f}J$ is a nil-Armendariz ring if and only if $A$ is a nil-Armendariz ring.
  \item Assume that $ f^{-1}(J)\subseteq nil(A)$. Then $A\bowtie^{f}J$ is a nil-Armendariz ring if and only if $f(A)+J$ is a nil-Armendariz ring.
  \item Assume that $f$ is injective.\\
  \i $f(A)\cap J=0$. Then $A\bowtie^{f}J$ is a nil-Armendariz ring if and only if $f(A)+J$ is a nil-Armendariz ring.\\
  \ii $J\subseteq nil(B)$. Then $A\bowtie^{f}J$ is a nil-Armendariz ring if and only if $f(A)+J$ is a nil-Armendariz ring.
\end{enumerate}
\end{theorem}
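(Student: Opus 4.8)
The plan rests on two structural observations. First, since $A\bowtie^{f}J$ is a subring of $A\times B$, an element $(a,f(a)+j)$ is nilpotent exactly when $a\in nil(A)$ and $f(a)+j\in nil(B)$; equivalently $nil(A\bowtie^{f}J)=\{(a,f(a)+j)\mid a\in nil(A),\ f(a)+j\in nil(B)\}$. Second, I would record the elementary fact that the nil-Armendariz property descends along a surjection with nil kernel: if $R$ is nil-Armendariz and $I$ is an ideal with $I\subseteq nil(R)$, then $nil(R/I)=nil(R)/I$, and lifting any pair of polynomials whose product lands in $nil(R/I)[x]$ to $R[x]$ shows $R/I$ is again nil-Armendariz. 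Throughout I would exploit the two canonical surjections $\pi_A\colon A\bowtie^{f}J\to A$, $(a,f(a)+j)\mapsto a$, and $\pi_B\colon A\bowtie^{f}J\to f(A)+J$, $(a,f(a)+j)\mapsto f(a)+j$, whose kernels are $\{0\}\times J$ and $\{(a,0)\mid a\in f^{-1}(J)\}$ respectively.

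For assertions (1) and (2) I would argue exactly as in Theorem~\ref{Armendariz.1}: for (1), embed an $A$-polynomial identity via $a_i\mapsto(a_i,f(a_i))$, noting that $f$ carries $nil(A)$ into $nil(B)$ so the product lands in $nil(A\bowtie^{f}J)[x]$; for (2), apply $\pi_A$ and $\pi_B$ to a product in $nil(A\bowtie^{f}J)[x]$ and combine the two coordinatewise conclusions using the description of $nil(A\bowtie^{f}J)$ above. Assertion (3) is the regular-central-element trick of the Armendariz case: given $f_Ag_A\in nil(f(A)+J)[x]$, multiply coefficients by a regular central $e\in J\cap S$ to push them into $A\bowtie^{f}J$ as $(0,e(\cdot))$, apply nil-Armendariz of the amalgamation, then cancel $e^{2}$ (legitimate since $e$ is regular and central, so $e^{2n}\beta^{n}=0$ forces $\beta\in nil(B)$); together with (1) and the converse (2) this yields the equivalence.

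The genuinely new input is controlling the $B$-coordinate, which matters because $nil(B)$ is not additively closed. For (4) with $J\subseteq nil(B)$, the forward direction is (1); conversely, assuming $A$ nil-Armendariz, I would apply $\pi_A$ to get $a_ib_j\in nil(A)$, then observe that modulo the ideal $J$ the element $\beta:=(f(a_i)+j_i)(f(b_j)+k_j)$ reduces to $\overline{f(a_ib_j)}$, which is nilpotent in $(f(A)+J)/J$; hence $\beta^{\,n}\in J\subseteq nil(B)$, so $\beta\in nil(B)$ and $(a_ib_j,\beta)\in nil(A\bowtie^{f}J)$. For (5) with $f^{-1}(J)\subseteq nil(A)$, the kernel of $\pi_B$ is nil, so the descent lemma gives the forward direction immediately; for the converse I would apply $\pi_B$ to obtain $(f(a_i)+j_i)(f(b_j)+k_j)\in nil(f(A)+J)$, pass to $(f(A)+J)/J$ to get $(a_ib_j)^{n}\in f^{-1}(J)\subseteq nil(A)$, whence $a_ib_j\in nil(A)$, and again both coordinates are nil. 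Finally (6) is a reduction: under $f$ injective with $f(A)\cap J=\{0\}$ one has $f^{-1}(J)=\{0\}$, so $\pi_B$ is an isomorphism and the equivalence is trivial; under $f$ injective with $J\subseteq nil(B)$, injectivity upgrades $J\subseteq nil(B)$ to $f^{-1}(J)\subseteq nil(A)$, placing us squarely in the situation of (5).

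The step I expect to be the main obstacle is exactly the control of the $B$-coordinate in (4) and (5): one cannot simply add the ``obviously nilpotent'' pieces $f(a_ib_j)$ and the $J$-part, because nilpotents need not sum to nilpotents. The device that resolves this uniformly is reduction modulo $J$, which turns a sum of a nilpotent and an element of the nil ideal $J$ into an element whose $n$-th power lies in $J\subseteq nil(B)$ and is therefore nilpotent.
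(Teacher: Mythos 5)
Your proposal is correct, and for assertions (1)--(5) it is essentially the paper's own argument: the paper handles (1)--(3) by the same adaptations of Theorem~\ref{Armendariz.1} (the embedding $a\mapsto(a,f(a))$, projection onto the two coordinates, and the regular-central-element trick with cancellation of $e^{2}$), and its proofs of (4) and (5) are exactly your mod-$J$ device, phrased through the isomorphisms $\frac{A\bowtie^{f}J}{0\times J}\simeq A$ and $\frac{A\bowtie^{f}J}{f^{-1}(J)\times 0}\simeq f(A)+J$: nilpotency passes to the quotient, nil-Armendariz of the quotient puts a power of each coefficient product inside $0\times J$ (resp. $f^{-1}(J)\times 0$), and the hypothesis $J\subseteq nil(B)$ (resp. $f^{-1}(J)\subseteq nil(A)$) lifts nilpotency back to $A\bowtie^{f}J$. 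Your two structural observations --- the coordinatewise description of $nil(A\bowtie^{f}J)$ and the descent lemma ``nil-Armendariz passes to quotients by nil ideals'' --- are exactly what the paper verifies inline each time, so isolating them is a presentational improvement rather than a new idea.

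The one place you genuinely diverge is (6)(ii). The paper proves both directions of (6)(ii) from scratch, via a double-power computation: $(f(a_{i}b_{j}))^{n_{ij}}\in J\subseteq nil(B)$, then a further exponent $m_{ij}$ kills it, and injectivity of $f$ pulls $f\bigl(((a_{i}b_{j})^{n_{ij}})^{m_{ij}}\bigr)=0$ back to $(a_{i}b_{j})^{n_{ij}m_{ij}}=0$. You instead note that injectivity upgrades $J\subseteq nil(B)$ to $f^{-1}(J)\subseteq nil(A)$ (if $f(a)\in J$ then $f(a^{n})=f(a)^{n}=0$ for some $n$, so $a^{n}=0$), which makes (6)(ii) a literal special case of (5); this is not circular, since the proof of (5) nowhere invokes (6), and it eliminates the longest computation in the paper's proof. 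What the paper's explicit treatment buys is the independence of item (6) from item (5); what your reduction buys is brevity and a transparent logical dependence, at no loss of correctness.
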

\begin{proof}
The proofs of the assertions \1, \2 and \3 are similar to \1, \2 and \3 in Theorem~\ref{Armendariz.1}.

\noindent \4 Suppose that $A$ is a nil-Armendariz ring. Then $\frac{A\bowtie^{f}J}{0\times J}\simeq A$ is a nil-Armendariz ring. Let $F(x)=\sum_{i=0}^{i=n}(a_{i},f(a_{i})+j_{i})x^{i}$ and $G(x)=\sum_{j=0}^{j=m}(b_{j},f(b_{j})+k_{j})x^{j}$ be two polynomials in $(A\bowtie^{f}J)[x]$ such that $F(x)G(x)=\sum_{k=0}^{k=n+m}(\sum_{i+j=k}(a_{i}b_{j},(f(a_{i})+j_{i})(f(b_{j})+k_{j})))x^{k}\in nil((A\bowtie^{f}J)[x])$.
Set $\overline{F(x)}=\sum_{i=0}^{i=n}\overline{(a_{i},f(a_{i})+j_{i})}x^{i}$ and $\overline{G(x)}=\sum_{j=0}^{j=m}\overline{(b_{j},f(b_{j})+k_{j})}x^{j}$
in $ \frac{A\bowtie^{f}J}{0\times J}[x]$. Then
$F(x)G(x)\in nil(A\bowtie^{f}J)[x]$ implies that $\overline{F(x)}\,\overline{G(x)}\in nil\frac{A\bowtie^{f}J}{0\times J}[x]$.
Consequently $\overline{(a_{i},f(a_{i})+j_{i})}\,\overline{(b_{j},f(b_{j})+k_{j})}\in nil\frac{A\bowtie^{f}J}{0\times J}$ since $\frac{A\bowtie^{f}J}{0\times J}$ is nil-Armendariz. Hence $(a_{i}b_{j},(f(a_{i})+j_{i})(f(b_{j})+k_{j}))^{p_{ij}}\in 0\times J$ for some integer $p_{ij}$. Therefore $((f(a_{i})+j_{i})(f(b_{j})+k_{j}))^{p_{ij}}\in J\subseteq nil(B)$. Hence $(a_{i},f(a_{i})+j_{i})(b_{j},f(b_{j})+k_{j})\in nil(A\bowtie^{f}J)$
and this shows that $A\bowtie^{f}J$ is nil-Armendariz.\\

\noindent \5 Assume that $f^{-1}(J)\subseteq nil(A)$ and suppose that  $A\bowtie^{f}J$ is nil-Armendariz. Let $f_{A}(x)=\sum_{i=0}^{i=n}(f(a_{i})+j_{i})x^{i}$ and $g_{A}(x)=\sum_{j=0}^{j=m}(f(b_{j})+k_{j})x^{j}$ such that $f_{A}(x)g_{A}(x)\in nil(f(A)+J)[x]$. Let $F(x)=\sum_{i=0}^{i=n}(a_{i},f(a_{i})+j_{i})x^{i}$ and $G(x)=\sum_{j=0}^{j=m}(b_{j},f(b_{j})+k_{j})x^{j}$. Since $f_{A}(x)g_{A}(x)=\sum_{k=0}^{k=n+m}(\sum_{i+j=k}(f(a_{i})+j_{i})(f(b_{j})+k_{j}))x^{k}\in nil(f(A)+J)[x]$, $\sum_{i+j=k}(f(a_{i})+j_{i})(f(b_{j})+k_{j})\in nil(f(A)+J)$ for every $k\in \{0,. . . ,n+m\}$. Thus $\sum_{i+j=k}(f(a_{i}b_{j})+t_{ij})\in nil(f(A)+J)$ with $t_{ij}\in J$. Hence, for every $k\in \{0,. . . , n+m\}$,  $f(\sum_{i+j=k}a_{i}b_{j})+\sum_{i+j=k}t_{ij}$ is nilpotent.
So $(f(\sum_{i+j=k}a_{i}b_{j}))^{n_{ij}}\in J$ for some positive integer $n_{ij}$, and therefore $(\sum_{i+j=k}a_{i}b_{j})^{n_{ij}}\in f^{-1}(J)\subseteq nil(A)$ which, in turn, implies that $\sum_{i+j=k}a_{i}b_{j}\in nil(A)$. Consequently, $F(x)G(x)\in nil( A\bowtie^{f}J)[x]$ and hence $(\sum_{i+j=k}a_{i}b_{j},\sum_{i+j=k}(f(a_{i})+j_{i})(f(b_{j})+k_{j}))\in nil( A\bowtie^{f}J)$. Since $A\bowtie^{f}J$ is a nil-Armendariz ring, $(a_{i}b_{j},(f(a_{i})+j_{i})(f(b_{j})+k_{j}))=(a_{i},f(a_{i})+j_{i})(b_{j},f(b_{j})+k_{j})$ is nilpotent and so $(f(a_{i})+j_{i})(f(b_{j})+k_{j})\in nil(f(A)+J)$. Hence $f(A)+J$ is nil-Armendariz, as desired.\\
\noindent The converse is similar to \4  by using the fact that $\frac{A\bowtie^{f}J}{f^{-1}(J)\times0}\simeq f(A)+J$. \\

\6 Assume that $f$ is injective.\\
\i $f(A)\cap J=0$. In this case $A\bowtie^{f}J\simeq f(A)+J$ and the conclusion follows.\\
\ii Assume that $J\subseteq nil(B)$ and suppose that $f(A)+J$ is nil-Armendariz. Let $F(x)=\sum_{i=0}^{i=n}(a_{i},f(a_{i})+j_{i})x^{i}$ and $G(x)=\sum_{j=0}^{j=m}(b_{j},f(b_{j})+k_{j})x^{j}$ be two polynomials in $(A\bowtie^{f}J)[x]$ such that $F(x)G(x)\in nil(A\bowtie^{f}J)[x]$.
Set $f_{B}(x)=\sum_{i=0}^{i=n}(f(a_{i})+j_{i})x^{i}$ and $g_{B}(x)=\sum_{j=0}^{j=m}(f(b_{j})+k_{j})x^{j}$. Then
$F(x)G(x)\in  nil(A\bowtie^{f}J)[x]$ implies that $f_{B}(x)g_{B}(x)\in nil(f(A)+J)[x]$.
Hence $(f(a_{i})+j_{i})(f(b_{j})+k_{j}) \in nil(f(A)+J)$ since $f(A)+J$ is nil-Armendariz.
Now, we show that $a_{i}b_{j}$ is nilpotent. Indeed, since $(f(a_{i})+j_{i})(f(b_{j})+k_{j})=(f(a_{i}b_{j})+t_{ij})\in nil(f(A)+J),  t_{ij}\in J$,
$(f(a_{i}b_{j})+t_{ij})^{n_{ij}}=0 $ for some positive integer $n_{ij}$. Therefore $(f(a_{i}b_{j})^{n_{ij}}=f((a_{i}b_{j})^{n_{ij}})\in J\subseteq nil(B)$ and
so $(f((a_{i}b_{j})^{n_{ij}}))^{m_{ij}}=0 $ for some positive integer $m_{ij}$. Hence $f(((a_{i}b_{j})^{n_{ij}})^{m_{ij}})=0$ and therefore $(a_{i}b_{j})^{n_{ij}m_{ij}}=0$ since $f$ is injective. Consequently, $(a_{i},f(a_{i})+j_{i})(b_{j},f(b_{j})+k_{j})$ is nilpotent and this shows that $A\bowtie^{f}J$ is nil-Armendariz.\\
\noindent Conversely, suppose that $A\bowtie^{f}J$ is nil-Armendariz and let $f_A(x)=\sum_{i=0}^{i=n}(f(a_{i})+j_{i})x^{i}$ and $g_A(x)=\sum_{j=0}^{j=m}(f(b_{j})+k_{j})x^{j}$ be two polynomials in $(f(A)+J)[x]$ such that $f_A(x)g_A(x)\in nil(f(A)+J)[x]$.
Set $F(x)=\sum_{i=0}^{i=n}(a_{i},f(a_{i})+j_{i})x^{i}$ and $G(x)=\sum_{j=0}^{j=m}(b_{j},f(b_{j})+k_{j})x^{j}$. Then
\begin{eqnarray*}
         F(x)G(x) &=&\sum_{k=0}^{k=n+m}(\sum_{i+j=k}(a_{i}b_{j},(f(a_{i})+j_{i})(f(b_{j})+k_{j}))x^{k}\\
         &=& \sum_{k=0}^{k=n+m}(\sum_{i+j=k}a_{i}b_{j},\sum_{i+j=k}(f(a_{i})+j_{i})(f(b_{j})+k_{j}))x^{k}
\end{eqnarray*}
 But $f_A(x)g_A(x)\in nil (f(A)+J)[x]$ implies that $(\sum_{i+j=k}(f(a_{i})+j_{i})(f(b_{j})+k_{j}))^{n_{ij}}=0$ for some positive integer $n_{ij}$.
 Thus $(\sum_{i+j=k}(f(a_{i}b_{j})+t_{ij}))^{n_{ij}}=0$ for some positive integer $n_{ij}$
and so $(f(\sum_{i+j=k}a_{i}b_{j})+\sum_{i+j=k}t_{ij})^{n_{ij}}=0$. Hence $(f(\sum_{i+j=k}a_{i}b_{j})^{n_{ij}}\in J\subseteq nil(B)$ and therefore  $f((\sum_{i+j=k}a_{i}b_{j})^{m_{ij}})=0$ for some positive integer $m_{ij}$. Since $f$ is injective, $(\sum_{i+j=k}a_{i}b_{j})^{m_{ij}}=0$ and hence $F(x)G(x)\in nil(A\bowtie^{f}J)[x]$, which in turn, implies that $(a_{i}b_{j},(f(a_{i})+j_{i})(f(b_{j})+k_{j}))\in nil(A\bowtie^{f}J)$.
Therefore $(f(a_{i})+j_{i})(f(b_{j})+k_{j})\in nil(f(A)+J)$ and this shows that $f(A)+J$ is nil-Armendariz.
\end{proof}
\section{weak Armendariz property in amalgamated algebra along an ideal}\label{weak Armendariz}

\begin{theorem}\label{weak Armendariz.1}
Let $(A,B)$ be a pair of rings, $f: A \rightarrow B$ be a ring homomorphism and  $J$ be a  proper ideal of $B$, then
\begin{enumerate}
  \item If $A\bowtie^{f}J$ is a weak Armendariz ring, then so is $A$.
  \item If $A$ and $f(A)+J$ are weak Armendariz rings, then so is $A\bowtie^{f}J$.
  \item Assume that $J\cap S\neq\varnothing$ where $S$ is the set of regular central element of $B$. Then
  $A\bowtie^{f}J$ is a weak Armendariz ring if and only if $f(A)+J$ and $A$ are weak Armendariz rings.
  \item Assume that $J\subseteq nil(B)$. Then $A$ is weak Armendariz ring if and only if $A\bowtie^{f}J$ is a weak Armendariz ring.
  \item Assume that $ f^{-1}(J)\subseteq nil(A)$. If  $f(A)+J$  is a weak Armendariz ring, then $A\bowtie^{f}J$ is a weak Armendariz ring.
  \item Assume that $f$ is injective.\\
  \i $f(A)\cap J=0$. Then $A\bowtie^{f}J$ is a weak Armendariz ring if and only if $f(A)+J$ is a weak Armendariz ring.\\
  \ii $J\subseteq nil(B)$. If $f(A)+J$ is a weak Armendariz ring, then $A\bowtie^{f}J$ is a weak Armendariz ring.
\item Assume that $J$ is semicommutative. If $A$ is a weak Armendariz ring, then so is $A\bowtie^{f}J$.
\item Assume that $f^{-1}(J)$ is semicommutative. If $f(A)+J$ is a weak Armendariz ring, then so is $A\bowtie^{f}J$.
\end{enumerate}
\end{theorem}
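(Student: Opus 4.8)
The plan is to follow the scheme of statement (5), of which (8) is the semicommutative analogue, and to concentrate the extra difficulty in one step. First I would take $F(x)=\sum_{i=0}^{n}(a_{i},f(a_{i})+j_{i})x^{i}$ and $G(x)=\sum_{j=0}^{m}(b_{j},f(b_{j})+k_{j})x^{j}$ in $(A\bowtie^{f}J)[x]$ with $F(x)G(x)=0$, and read off the two coordinate projections $f_{A}(x)g_{A}(x)=0$ in $A[x]$ and $f_{B}(x)g_{B}(x)=0$ in $(f(A)+J)[x]$. Since $A\bowtie^{f}J$ is a subring of $A\times B$, an element is nilpotent exactly when both of its components are; hence it suffices to prove $a_{i}b_{j}\in nil(A)$ and $(f(a_{i})+j_{i})(f(b_{j})+k_{j})\in nil(B)$ for all $i,j$. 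The second is immediate: $f(A)+J$ is weak Armendariz and $f_{B}g_{B}=0$, so every product $(f(a_{i})+j_{i})(f(b_{j})+k_{j})$ already lies in $nil(f(A)+J)\subseteq nil(B)$. Everything therefore reduces to showing $a_{i}b_{j}\in nil(A)$.

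Next I would transfer the $B$-side information to the ideal $K:=f^{-1}(J)$. Writing $(f(a_{i})+j_{i})(f(b_{j})+k_{j})=f(a_{i}b_{j})+t_{ij}$ with $t_{ij}\in J$ and using that this element is nilpotent, an expansion of $(f(a_{i}b_{j})+t_{ij})^{N}=0$ (every monomial except $f(a_{i}b_{j})^{N}$ carrying a factor from the ideal $J$) gives $f\big((a_{i}b_{j})^{N}\big)\in J$, i.e. $(a_{i}b_{j})^{N}\in K$ for a uniform $N$. Thus each coefficient product is nilpotent modulo the semicommutative ideal $K$. This is precisely the point at which the proof of (5) simply invoked $K\subseteq nil(A)$ to finish; here that shortcut is unavailable and must be replaced by an argument using the relation $f_{A}g_{A}=0$.

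For the remaining core step I would attempt to transcribe the induction on $\deg F$ from the proof of statement (5) of Theorem~\ref{Armendariz.1}: reduce to showing each $a_{0}b_{t}$ is nilpotent (so that $F=(a_{0},f(a_{0})+j_{0})+xF_{1}$ peels off and the degree drops), and attack this with the minimal-index argument applied to the identities $\sum_{i+j=k}a_{i}b_{j}=0$. The quantities that must be controlled are the sandwiched products $b_{t}\,K\,a_{0}$ and powers of the leading product $a_{0}b_{l}$, and the only tool available for them is the semicommutativity of $K$, namely $pKq=0$ whenever $p,q\in K$ and $pq=0$.

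The main obstacle is that the proof of (5) uses \emph{exact} vanishing at two points which here survive only up to nilpotency. There, from $a_{0}b_{t}=0$ one gets $(b_{t}Ka_{0})^{2}=0$ and then $b_{t}Ka_{0}=0$ via $K\cap nil(A)=0$, and separately $a_{0}b_{l}\in K$ holds on the nose; both feed the cancellation of the cross terms. Under the hypotheses of (8) one has instead only $a_{0}b_{t}\in nil(A)$ and $(a_{0}b_{l})^{N}\in K$, so neither the square-zero deduction nor the membership $a_{0}b_{l}\in K$ is literally available, and semicommutativity of the \emph{ideal} $K$ yields a sandwiched zero rather than the full $pAq=0$ of a semicommutative ambient ring. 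Closing the gap therefore requires reorganising the computation around the $N$-th powers $d_{ij}=(a_{i}b_{j})^{N}\in K$, to which semicommutativity of $K$ legitimately applies, and propagating \emph{nilpotency} (not vanishing) through the convolution identities; making this propagation terminate is the genuine difficulty, and is the step I would expect to occupy the bulk of the proof.
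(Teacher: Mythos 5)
Your setup is sound and coincides with the paper's: project $F(x)G(x)=0$ onto the two coordinates, get $(f(a_i)+j_i)(f(b_j)+k_j)\in nil(f(A)+J)$ at once from the weak Armendariz hypothesis on $f(A)+J$, expand $(f(a_ib_j)+t_{ij})^{N}=0$ to conclude $(a_ib_j)^{N}\in f^{-1}(J)$, and reduce everything to the claim $a_ib_j\in nil(A)$. You also correctly diagnose why the degree induction of Theorem~\ref{Armendariz.1}(5) cannot be transcribed: exact vanishing is replaced by mere nilpotency. But at exactly that point the proposal stops. You state that propagating nilpotency through the convolution identities is ``the genuine difficulty'' that ``would occupy the bulk of the proof,'' and you never carry it out. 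That missing propagation \emph{is} the content of statement (8); as written, what you have is an accurate problem analysis plus a correct reduction, not a proof. (The proposal also addresses only part (8) of the eight-part statement, though parts (1)--(6) do reduce to the earlier theorems, as the paper itself notes, and (7) is the mirror image of (8) with the roles of $A$ and $f(A)+J$ exchanged.)

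For comparison, the paper closes the gap not by induction on $\deg F$ but by induction on the anti-diagonal index $i+j$. Fix $l$, assume $a_ib_j\in nil(A)$ for $i+j<l$, and set $p=n_{0l}$, so $(a_0b_l)^{p}\in f^{-1}(J)$, and $t$ with $(a_0b_{l-1})^{t}=0$, hence $(b_{l-1}a_0)^{t+1}=0$. Semicommutativity of $f^{-1}(J)$ is used as an \emph{insertion} device: starting from $((a_1b_{l-1})(a_0b_l)^{p+1}a_1)(b_{l-1}a_0)^{t+1}(b_{l-1}(a_0b_l)^{p+1})=0$, one may insert $b_l(a_0b_l)^{p}a_1$ between factors lying in $f^{-1}(J)$, and iterating this yields $[(a_1b_{l-1})(a_0b_l)^{p+1}]^{t+3}=0$; the same works for each $(a_ib_{l-i})(a_0b_l)^{p+1}$, $i\geq 1$. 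Two further ingredients, absent from your proposal, are then decisive: first, since $f^{-1}(J)$ is semicommutative, $nil(f^{-1}(J))$ is an \emph{ideal}, so these nilpotent products can be summed; second, multiplying $\sum_{i+j=l}a_ib_j=0$ on the right by $(a_0b_l)^{p+1}$ gives $(a_0b_l)^{p+2}=-\sum_{i=1}^{i=l}(a_ib_{l-i})(a_0b_l)^{p+1}\in nil(f^{-1}(J))$, whence $a_0b_l\in nil(A)$. One then eliminates $a_1b_{l-1},a_2b_{l-2},\dots,a_lb_0$ successively within the same diagonal by the analogous computation (multiplying by $(a_1b_{l-1})^{q+1}$ with $q=n_{1,l-1}$, and so on). Your idea of reorganizing around the powers $(a_ib_j)^{N}\in f^{-1}(J)$ points in the right direction, but without the insertion trick, the ideal property of $nil(f^{-1}(J))$, and the diagonal-by-diagonal elimination, your induction has no mechanism by which to terminate.
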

\begin{proof}
The assertions \1, \2 and \3 are similar to \1, \2 and \3 in Theorem~\ref{Armendariz.1}, and the assertions \4, \5 and \6 are similar to \4, \5 and \6 in Theorem~\ref{nil-Armendariz.1}.\\
\noindent \7 Assume that $J$ is semicommutative and $A$ is weak Armendariz. Let $F(x)=\sum_{i=0}^{i=n}(a_{i},f(a_{i})+j_{i})x^{i}$ and $G(x)=\sum_{j=0}^{j=m}(b_{j},f(b_{j})+k_{j})x^{j}$ be two polynomials in $A\bowtie^{f}J[x]$ such that $F(x)G(x)=0$ and set  $f_{A}(x)=\sum_{i=0}^{i=n}a_{i}x^{j}$, $g_{A}(x)=\sum_{j=0}^{j=m}b_{j}x^{j}$, $f_{B}(x)=\sum_{i=0}^{i=n}(f(a_{i})+j_{i})x^{i}$ and $g_{B}(x)=\sum_{j=0}^{j=m}(f(b_{j})+k_{j})x^{j}$. Then $F(x)G(x)=0$ implies that $f_{A}(x)g_{A}(x)=\sum_{l=0}^{l=n+m}(\sum_{i+j=l}a_{i}b_{j})x^{l}=0$
and $f_{B}(x)g_{B}(x)=\sum_{l=0}^{l=n+m}(\sum_{i+j=l}(f(a_{i})+j_{i})(f(b_{j})+k_{j}))x^{l}=0$. Hence $\sum_{i+j=l}a_{i}b_{j}=0 $ for all $l =0,1...,n+m $ and $\sum_{i+j=l}(f(a_{i})+j_{i})(f(b_{j})+k_{j})=0$ for all $l =0,1...,n+m$. Thus $a_{i}b_{j}\in nil(A)$ since $A$ is weak Armendariz, and so $(a_{i}b_{j})^{n_{ij}}=0$ for some positive integer $n_{ij}$. To show that $(f(a_{i})+j_{i})(f(b_{j})+k_{j})\in nil(f(A)+J)$, we proceed by induction on $i+j$.\\
If $i+j=0$, we have $(f(a_{0})+j_{0})(f(b_{0})+k_{0})=0\in nil(f(A)+J)$\\
Let $l$ be a positive integer such that $(f(a_{i})+j_{i})(f(b_{j})+k_{j})\in nil(f(A)+J)$ when $i+j< l$. We will show that $(f(a_{i})+j_{i})(f(b_{j})+k_{j})\in nil(f(A)+J)$ when $i+j= l$. We have $((f(a_{0})+j_{0})(f(b_{l})+k_{l}))^{n_{0l}=p}\in J$ since $(a_{0}b_{l})^{p}=0$. By the induction hypothesis, $(f(a_{0})+j_{0})(f(b_{l-1})+k_{l-1})\in nil(f(A)+J)$. Let $t$ be a positive integer such that $((f(a_{0})+j_{0})(f(b_{l-1})+k_{l-1}))^{t}=0$.
Then $((f(b_{l-1})+k_{l-1})(f(a_{0})+j_{0}))^{t+1}=0$, and hence $(((f(a_{1})+j_{1})(f(b_{l-1})+k_{l-1}))((f(a_{0})+j_{0})(f(b_{l})+k_{l}))^{p+1}(f(a_{1})+j_{1}))\times ((f(b_{l-1})+k_{l-1})(f(a_{0})+j_{0}))^{t+1}((f(b_{l-1})+k_{l-1})((f(a_{0})+j_{0})(f(b_{l})+k_{l}))^{p+1})=0$. Since $(((f(a_{1})+j_{1})(f(b_{l-1})+k_{l-1}))((f(a_{0})+j_{0})(f(b_{l})+k_{l}))^{p+1}(f(a_{1})+j_{1}))(f(b_{l-1})+k_{l-1})(f(a_{0})+j_{0})\in J$,
$((f(b_{l-1})+k_{l-1})(f(a_{0})+j_{0}))^{t}((f(b_{l-1})+k_{l-1})((f(a_{0})+j_{0})(f(b_{l})+k_{l}))^{p+1})\in J$,
$(f(b_{l})+k_{l})((f(a_{0})+j_{0})(f(b_{l})+k_{l}))^{p})(f(a_{1})+j_{1})\in J$ and $J$ is semicommutative, it follows that
$(((f(a_{1})+j_{1})(f(b_{l-1})+k_{l-1}))((f(a_{0})+j_{0})(f(b_{l})+k_{l}))^{p+1}(f(a_{1})+j_{1}))((f(b_{l-1})+k_{l-1})(f(a_{0})+j_{0}))\times
    (f(b_{l})+k_{l})((f(a_{0})+j_{0})(f(b_{l})+k_{l}))^{p})(f(a_{1})+j_{1})\times
    ((f(b_{l-1})+k_{l-1})(f(a_{0})+j_{0}))^{t}((f(b_{l-1})+k_{l-1})((f(a_{0})+j_{0})(f(b_{l})+k_{l}))^{p+1})=0$.
Hence $[((f(a_{1})+j_{1})(f(b_{l-1})+k_{l-1}))((f(a_{0})+j_{0})(f(b_{l})+k_{l}))^{p+1}]^{2}(f(a_{1})+j_{1})\times
    ((f(b_{l-1})+k_{l-1})(f(a_{0})+j_{0}))^{t}((f(b_{l-1})+k_{l-1})((f(a_{0})+j_{0})(f(b_{l})+k_{l}))^{p+1})=0$.
Continuing this procedure, we obtain $[((f(a_{1})+j_{1})(f(b_{l-1})+k_{l-1}))((f(a_{0})+j_{0})(f(b_{l})+k_{l}))^{p+1}]^{t+3}=0$.\\
Thus $((f(a_{1})+j_{1})(f(b_{l-1})+k_{l-1}))((f(a_{0})+j_{0})(f(b_{l})+k_{l}))^{p+1}\in nil(J)$.\\
Similarly, we can show that $((f(a_{i})+j_{i})(f(b_{l-i})+k_{l-i}))((f(a_{0})+j_{0})(f(b_{l})+k_{l}))^{p+1}\in nil(J)$ for $i=2,..., l$.\\
Since $J$ is semicommutative, $nil(J)$ is an ideal and consequently $\sum_{i=1}^{i=l}(f(a_{i})+j_{i})(f(b_{l-i})+k_{l-i}))((f(a_{0})+j_{0})(f(b_{l})+k_{l}))^{p+1}\in nil(J)$.
Multiplying the equation $\sum_{i+j=l}(f(a_{i})+j_{i})(f(b_{j})+k_{j})=0$ on the right side by
$((f(a_{0})+j_{0})(f(b_{l})+k_{l}))^{p+1}$, we obtain $((f(a_{0})+j_{0})(f(b_{l})+k_{l}))^{p+2}=-\sum_{i=1}^{i=l}(f(a_{i})+j_{i})(f(b_{j})+k_{j})((f(a_{0})+j_{0})(f(b_{l})+k_{l}))^{p+1}\in nil(J)$.\\
Thus $(f(a_{0})+j_{0})(f(b_{l})+k_{l})\in nil(f(A)+J)$.\\
Let $q=n_{1,l-1}$. Then $((f(a_{1})+j_{1})(f(b_{l-1})+k_{l-1}))^{q}\in J$. By analogy with the above proof, we have \\
    $$\sum_{i=2}^{i=l}(f(a_{i})+j_{i})(f(b_{l-i})+k_{l-i})((f(a_{1})+j_{1})(f(b_{l-1})+k_{l-1}))^{q+1}\in nil(J)$$
Suppose that $((f(a_{0})+j_{0})(f(b_{l})+k_{l}))^{s}=0$. Then \\
    $$((f(a_{1})+j_{1})(f(b_{l-1})+k_{l-1}))^{q+1}((f(a_{0})+j_{0})(f(b_{l})+k_{l}))^{s}((f(a_{1})+j_{1})(f(b_{l-1})+k_{l-1}))^{q+1}=0$$
Since $((f(a_{1})+j_{1})(f(b_{l-1})+k_{l-1}))^{q+1}\in J$ and $J$ is semicommutative,
    $$ ((f(a_{0})+j_{0})(f(b_{l})+k_{l})((f(a_{1})+j_{1})(f(b_{l-1})+k_{l-1}))^{q+1})^{s+1}=0$$
Therefore $$((f(a_{0})+j_{0})(f(b_{l})+k_{l})((f(a_{1})+j_{1})(f(b_{l-1})+k_{l-1}))^{q+1}\in nil(J)$$
Multiplying the equation $\sum_{i+j=l}(f(a_{i})+j_{i})(f(b_{j})+k_{j}))=0$ on the right side by $((f(a_{1})+j_{1})(f(b_{l-1})+k_{l-1}))^{q+1}$, we obtain
    $((f(a_{1})+j_{1})(f(b_{l-1})+k_{l-1}))^{q+2}=-\sum_{i=2}^{i=l}((f(a_{i})+j_{i})(f(b_{l-i})+k_{l-i}))((f(a_{1})+j_{1})(f(b_{l-1})+k_{l-1}))^{q+1}
                -((f(a_{0})+j_{0})(f(b_{l})+k_{l}))((f(a_{1})+j_{1})(f(b_{l-1})+k_{l-1}))^{q+1}\in nil(J)$.
Therefore $ (f(a_{1})+j_{1})(f(b_{l-1})+k_{l-1})\in nil(f(A)+J)$.\\
A similar argument shows that $$(f(a_{2})+j_{2})(f(b_{l-2})+k_{l-2})\in nil(f(A)+J)\,...\,(f(a_{l})+j_{l})(f(b_{0})+k_{0})\in nil(f(A)+J)$$
Consequently $(f(a_{i})+j_{i})(f(b_{j})+k_{j})\in nil(f(A)+J)$ when $i+j=l$, and therefore $(f(a_{i})+j_{i})(f(b_{j})+k_{j})\in nil(f(A)+J)$ for every $i, j$.
Hence $(a_{i},f(a_{i})+j_{i})(b_{j},f(b_{j})+k_{j})\in nil(A\bowtie^{f}J)$, and this shows that $A\bowtie^{f}J$ is weak Armendariz.\\

\8 Assume that $f^{-1}(J)$ is semicommutative and $f(A)+J$ is weak Armendariz.
Let $F(x)=\sum_{i=0}^{i=n}(a_{i},f(a_{i})+j_{i})x^{i}$ and $G(x)=\sum_{j=0}^{j=m}(b_{j},f(b_{j})+k_{j})x^{j}$
  be two polynomials in  $A\bowtie^{f}J[x]$ such that $F(x)G(x)=0$ and set  $f_{A}(x)=\sum_{i=0}^{i=n}a_{i}x^{j}$, $g_{A}(x)=\sum_{j=0}^{j=m}b_{j}x^{j}$,
  $f_{B}(x)=\sum_{i=0}^{i=n}(f(a_{i})+j_{i})x^{i}$ and $g_{B}(x)=\sum_{j=0}^{j=m}(f(b_{j})+k_{j})x^{j}$.
  Then $F(x)G(x)=0$ implies that $f_{A}(x)g_{A}(x)=\sum_{l=0}^{l=n+m}(\sum_{i+j=l}a_{i}b_{j})x^{l}=0$
  and $f_{B}(x)g_{B}(x)=\sum_{l=0}^{l=n+m}(\sum_{i+j=l}(f(a_{i})+j_{i})(f(b_{j})+k_{j}))x^{l}=0$.
  Hence $\sum_{i+j=l}a_{i}b_{j}=0 $ for all $l =0,1..., n+m $ and
  $\sum_{i+j=l}(f(a_{i})+j_{i})(f(b_{j})+k_{j})=0$ for all $l =0,1..., n+m$.
  Therefore $(f(a_{i})+j_{i})(f(b_{j})+k_{j})\in nil(f(A)+J)$ since $f(A)+J$ is weak Armendariz.
Since $(f(a_{i})+j_{i})(f(b_{j})+k_{j})=(f(a_{i}b_{j})+t_{ij})\in nil(f(A)+J)$, where $t_{ij}\in J$,
$(f(a_{i}b_{j})+t_{ij})^{n_{ij}}=0$, for some positive integer $n_{ij}$.
Therefore $(f(a_{i}b_{j}))^{n_{ij}}=f((a_{i}b_{j})^{n_{ij}})\in J$, and hence $(a_{i}b_{j})^{n_{ij}}\in f^{-1}(J)$.
Now we show that $a_{i}b_{j}\in nil(A)$ by induction on $i+j$.\\
If $i+j=0$, we have $a_{0}b_{0}=0\in nil(A)$ and so we are done.\\
Let $l$ be a positive integer such that $a_{i}b_{j}\in nil(A)$ when $i+j< l$. As in \7, we will show that $a_{i}b_{j}\in nil(A)$ when $i+j=l$.\\
We have $(a_{0}b_{l})^{n_{0l}=p}\in f^{-1}(J)$ and by the induction hypothesis, $(a_{0}b_{l-1})\in nil(A)$.
Let $t$ be a positive integer such that $(a_{0}b_{l-1})^{t}=0$. Then $(b_{l-1}a_{0} )^{t+1}=0$ and hence\\
$((a_{1}b_{l-1})(a_{0}b_{l})^{p+1}a_{1})(b_{l-1}a_{0} )^{t+1}(b_{l-1}(a_{0}b_{l})^{p+1})=0$.
Since $$(a_{1}b_{l-1})(a_{0}b_{l})^{p+1}a_{1})(b_{l-1}a_{0} )\in  f^{-1}(J)$$
    $$(b_{l-1}a_{0} )^{t}(b_{l-1}(a_{0}b_{l})^{p+1})\in  f^{-1}(J)$$
    $$ (b_{l}(a_{0}b_{l})^{p}a_{1}\in  f^{-1}(J)$$ and $ f^{-1}(J)$ is semicommutative, we obtain\\
    $$((a_{1}b_{l-1})(a_{0}b_{l})^{p+1}a_{1})(b_{l-1}a_{0})(b_{l}(a_{0}b_{l})^{p}a_{1})(b_{l-1}a_{0} )^{t}(b_{l-1}(a_{0}b_{l})^{p+1})=0$$
Hence $[(a_{1}b_{l-1})(a_{0}b_{l})^{p+1}]^{2}a_{1}(b_{l-1}a_{0})^{t}(b_{l-1}(a_{0}b_{l})^{p+1})=0$.\\
iterating this process, we obtain:
    $$[(a_{1}b_{l-1})(a_{0}b_{l})^{p+1}]^{t+3}=0$$
Thus $(a_{1}b_{l-1})(a_{0}b_{l})^{p+1}\in nil f^{-1}(J)$, and similarly we have
     $(a_{i}b_{l-i})(a_{0}b_{l})^{p+1}\in nil f^{-1}(J)$ for $i=2,..., l$.
Since $f^{-1}(J)$ is semicommutative, $nil f^{-1}(J)$ is an ideal and consequently
       $$\sum_{i=1}^{i=l}(a_{i}b_{l-i})(a_{0}b_{l})^{p+1}\in nil f^{-1}(J)$$
Multiply the equation $\sum_{i+j=l}a_{i}b_{j}=0$ on the right side by $(a_{0}b_{l})^{p+1}$, we get:
$$(a_{0}b_{l})^{p+2}=-\sum_{i=1}^{i=l}(a_{i}b_{l-i})(a_{0}b_{l})^{p+1}\in nil f^{-1}(J)$$
Thus $a_{0}b_{l}\in nil(A)$. Now, let $q=n_{1,l-1}$. Then $(a_{1}b_{l-1})^{q}\in f^{-1}(J) $. As in the above proof, we have
$$\sum_{i=2}^{i=l}(a_{i}b_{l-i})(a_{1}b_{l-1})^{q+1}\in nil f^{-1}(J)$$.
Suppose that $(a_{0}b_{l} )^{s}=0$. Then
$$(a_{1}b_{l-1})^{q+1}(a_{0}b_{l})^{s}(a_{1}b_{l-1})^{q+1}=0 $$
Since $(a_{1}b_{l-1})^{q+1}\in f^{-1}(J) $ and $f^{-1}(J)$ is semicommutative,
$$((a_{0}b_{l})(a_{1}b_{l-1})^{q+1})^{s+1}=0 $$
Therefore $$(a_{0}b_{l})(a_{1}b_{l-1})^{q+1}\in nil(f^{-1}(J))$$
If we multiply the equation $\sum_{i+j=l}a_{i}b_{j}=0$ on the right side by
    $(a_{1}b_{l-1})^{q+1}$, we obtain
$$(a_{1}b_{l-1})^{q+2}=-\sum_{i=2}^{i=l}(a_{i}b_{l-i})(a_{1}b_{l-1})^{q+1}-(a_{0}b_{l})(a_{1}b_{l-1})^{q+1}\in nil f^{-1}(J)$$
Therefore $ a_{1}b_{l-1}\in nil(A)$. Similarly, we have $a_{2}b_{l-2}\in nil(A),...,a_{l}b_{0}\in nil(A)$ and consequently $ a_{i}b_{j}\in nil(A))$ when $i+j=l$. Therefore, $a_{i}b_{j}\in nil(A)$ for every $i, j$ and hence $(a_{i},f(a_{i})+j_{i})(b_{j},f(b_{j})+k_{j})\in nil A\bowtie^{f}J$. This shows that $A\bowtie^{f}J$ is weak Armendariz and complete the proof.
\end{proof}

\noindent{\bf Remark}.\\
As we mentioned in the introduction, we do not know so far any example of weak Armendariz ring which is not a
nil-Armendariz ring. This question was left open in \cite{ant}. We were not able to answer the question of whether $A\bowtie^{f}J$ is a nil-Armendariz ring if and only it is a weak Armendariz ring. A negative answer will provide a counter-example of a weak Armendariz ring that is not nil-Armendariz. However, a positive answer shows that amalgamation of algebras along ideals, as a source of examples and counter-examples, cannot provide such example if it exists.
\bibliographystyle{amsplain}

\end{document}